  \newcounter{constant}
\def\arraypar#1{\parbox[c]{\textwidth - 2cm}{\centering #1}}
\newcommand{\comp}{\mathsf{c}}
\newcommand{\pcb}{\smash{p_c^{\mathrm{b}}}}
\newcommand{\pcs}{\smash{p_c^{\mathrm{s}}}}
\newcommand{\vO}{\smash{\vec{\mathbb{O}}}}
\newcommand{\vZ}{\smash{\vec{\mathbb{Z}}}}
\newcommand{\cC}{\ensuremath{\mathcal{C}}}
\newcommand{\cF}{\ensuremath{\mathcal{F}}}
\newcommand{\cP}{\ensuremath{\mathcal{P}}}
\newcommand{\EE}{\ensuremath{\mathbb{E}}}
\newcommand{\HH}{\ensuremath{\mathbb{H}}}
\newcommand{\NN}{\ensuremath{\mathbb{N}}}
\newcommand{\PP}{\ensuremath{\mathbb{P}}}
\newcommand{\RR}{\ensuremath{\mathbb{R}}}
\newcommand{\ZZ}{\ensuremath{\mathbb{Z}}}
\theoremstyle{plain}
\newtheorem{teo}{Theorem}
\newtheorem{lema}[teo]{Lemma}
\newtheorem{coro}[teo]{Corollary}
\theoremstyle{definition}
\theoremstyle{remark}
\newtheorem{remark}{Remark}
\title{A note on oriented percolation with inhomogeneities and strict inequalities}
\author{
Bernardo N.B. de Lima\footnote{Departamento de Matem\'atica. Universidade Federal
de Minas Gerais, MG, Brazil.
E-mail: bnblima@mat.ufmg.br}
\and
Daniel Ungaretti\footnote{Instituto de Matem\'atica. Universidade Federal
do Rio de Janeiro, RJ, Brazil.
E-mail: daniel@im.ufrj.br}
\and
Maria Eul\'alia Vares\footnote{Instituto de Matem\'atica. Universidade Federal
do Rio de Janeiro, RJ, Brazil. \!Email: eulalia@im.ufrj.br}}
\begin{document}
\maketitle

\begin{abstract}
This note was motivated by natural questions related to oriented percolation on a layered environment that introduces long range dependence. As a convenient tool, we are led to deal with questions on the strict decrease of the percolation parameter in the oriented setup when an extra dimension is added. 

\medskip
\noindent\textbf{MSC2020:} 60K35; 82B43\\
\textbf{Keywords:} strict inequalities; percolation in a dependent environment; $3$-dimensional hexagonal lattice.
\end{abstract}

\section{Introduction}
\label{sec:introduction}
This note was motivated by a question related to oriented percolation on a naturally occurring graph with layered random inhomogeneities that introduce long range dependence. Among several works that deal with such type of situation, we were particularly inspired by the findings of ~\cite{bramson1991contact, kesten2022oriented,
delima2022dependent, duminil2018brochette} that deal with this kind of problem
for percolation models on planar graphs. Our original motivation had to do with the hexagonal space lattice which we may think of as an infinite version of orange piles that are commonly found in greengrocers. For instance, the fruit seller may be interested in controlling the propagation of fungi through the use of simple artifacts as the introduction of a protection between some of the layers within the pile.

In the course of answering some of the questions that were initially posed, we ended up having to consider how the critical parameter of oriented percolation would be affected by the addition of an extra dimension, e.g. replacing a given graph $\mathbb{G}$ by its corresponding ladder graph $\mathbb{G} \times \mathbb{Z}_+$. As it is well known, general enhancement techniques used in percolation do not extend in a straight manner to the oriented setup, and we had to examine this matter in a more specialized manner. This resulted in findings that we consider to have interest on their own, and that are reported in Section \ref{sec:change-pc}.

The paper is organized as follows: Section \ref{sec:laranjas} explains the model on the hexagonal space lattice and states our results. Section \ref{sec:change-pc} states the main results on the strict decrease of the percolation critical parameter. Section \ref{sec:laranjas_proofs} provides the proofs for the results on the hexagonal space lattice. Some open problems are stated in the concluding remarks.

\section{Inhomogeneous percolation on the hexagonal space lattice}
\label{sec:laranjas}


\medskip
\noindent
\textbf{Hexagonal space lattice.} The graphs that we consider have the hexagonal space lattice $\HH$, as a natural choice of vertex, known to be the densest possible sphere packing in $3\mathrm{d}$-space.

We will focus on oriented (or semi-oriented) percolation and thus we need to choose an
orientation on $\HH$. Depending on the chosen direction we can have square
layers or triangular layers. We found it simpler to analyze when considering
square layers. Having defined $\HH$ as the lattice of $\RR^3$ generated by the vectors
\begin{equation*}
    \vec{u}_1 = (1,0,0)
    \quad \text{and} \quad
    \vec{u}_2 = (0,1,0)
    \quad \text{and} \quad
    \vec{u}_3 = (\tfrac{1}{2},\tfrac{1}{2},\tfrac{1}{\sqrt{2}}),
\end{equation*}
i.e., $\HH = \{a_1 \vec{u}_1 + a_2 \vec{u}_2 + a_3 \vec{u}_3;\; a_1, a_2 \in \ZZ , a_3 \in \ZZ_+\}$, we 
partition it into \textit{layers}: for each $n \in \ZZ$ we define $\HH_n$,
the layer of height $n$ of $\HH$, as $\HH_n := \{a_1 \vec{u}_1 + a_2 \vec{u}_2 + n \vec{u}_3;\;
a_1, a_2 \in \ZZ\}$ and notice that $\HH_n$ is isomorphic to $\ZZ^2$.

We turn $\HH$ into a graph (we will use the notation $\HH$ for the graph as well as for its set of vertices) by connecting all sites at Euclidean distance $1$, which is saying
that if we center a sphere of diameter $1$ at each site of $\HH$ then $v,w \in
\HH$ form an edge if their spheres touch. Hence, any site $v$ has 12 neighbors,
that can be classified according to their height with respect to $v$. If $v \in
\HH_n$, then the set of its neighbors $\Gamma(v)$ is the union of
\begin{align*}
\Gamma^+(v)
    &= \Gamma(v) \cap \HH_{n+1}
    = \{v + \vec{u}_3, v + \vec{u}_3 - \vec{u}_1, v + \vec{u}_3 - \vec{u}_2, v + \vec{u}_3 - \vec{u}_1 - \vec{u}_2 \}; \\
\Gamma^0(v)
    &= \Gamma(v) \cap \HH_{n\phantom{+1}}
    = \{v + \vec{u}_1, v + \vec{u}_2, v - \vec{u}_1, v - \vec{u}_2\}; \\
\Gamma^-(v)
    &= \Gamma(v) \cap \HH_{n-1}
    = \{v - \vec{u}_3, v - \vec{u}_3 + \vec{u}_1, v - \vec{u}_3 + \vec{u}_2, v - \vec{u}_3 + \vec{u}_1 + \vec{u}_2 \},
\end{align*}
the sets of neighbors \textit{above}, \textit{at same height} and
\textit{below} $v$, respectively.

\medskip
\noindent
\textbf{Oriented lattice.} We fix an upward orientation for edges:
for every site $v \in \HH$ we orient edges from $v$ to $\Gamma^{+}(v)$.
Regarding horizontal edges, i.e., edges connecting $v,w \in \HH_n$ for some $n$,
we choose to keep them unoriented. From the orientation choices above,
we define two graphs with vertices on $\HH$:
\begin{itemize}
\item Graph $\vO$, containing only the oriented edges. This makes $\vO$ a
    transitive oriented graph with outdegree $4$ and indegree $4$. Moreover,
    there are straightforward comparisons of $\vO$ with standard oriented graphs.
    Let $\vZ^{d}$ denote the graph with vertices in $\ZZ_+^{d}$ and
    nearest neighbor edges oriented away from the origin, then we can embed $\vZ^{3}$ in $\vO$. For instance we can take
    the sublattice generated by
\begin{equation*}
    w_1 = \vec{u}_3
    \quad \text{and} \quad
    w_2 = \vec{u}_3 - \vec{u}_1
    \quad \text{and} \quad
    w_3 = \vec{u}_3 - \vec{u}_1- \vec{u}_2.
\end{equation*}

\item Graph $\vO_h$, in which we consider both oriented and unoriented (horizontal) edges.
\end{itemize}

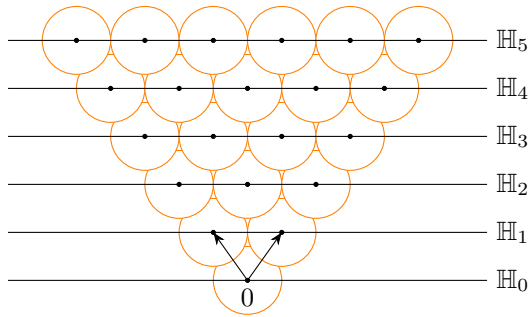
\begin{figure}[ht]
    \centering
    \begin{tikzpicture}[scale=.9]
\def\nrows{5}
\def\latticeheight{0.70711} 
  
  \foreach \row in {0,...,\nrows} {
    \foreach \col in {0,...,\row} {
      \draw[orange,fill=white] (\col-\row/2, \row*\latticeheight) circle (0.5);
      \filldraw (\col-\row/2, \row*\latticeheight) circle (0.03);
    }
      \draw (-3.5,\row*\latticeheight) -- (3.5,\row*\latticeheight) node[right]
      {$\mathbb{H}_{\row}$};
  }

    \draw[-Stealth] (0,0) -- ( 1/2, \latticeheight);
    \draw[-Stealth] (0,0) -- (-1/2, \latticeheight);

    \node at (0,-.25) {$0$};

    \end{tikzpicture}
    \caption{Lateral view of $\HH$, seen as the union of layers $\HH_n$ that are isomorphic to $\ZZ^2$. 
    In oriented graph $\vO$ each site has 4 upward neighbors. In graph $\vO_h$ we allow horizontal (unoriented) edges on each layer. Only sites that can be reached from the origin without horizontal edges are represented.}
    \label{fig:lateral_view}
\end{figure}

\medskip
\noindent
\textbf{Percolation models.}
We consider bond Bernoulli percolation on $\vO_h$ with a random environment.
The model has four parameters: $\delta, p_b, p_g, p_h \in [0,1]$, which we
describe below.

\begin{itemize}
\item The parameter $\delta$ represents the density of defects in our model.
Each layer $\HH_n$ is independently considered either \textit{bad}, with
probability $\delta$, or \textit{good}, with probability $1-\delta$. Upward
edges $vw$ with $v \in \HH_n$ and $w \in \Gamma^+(v)$ are open independently,
conditional on the type of layer:
\begin{itemize}
\item If $\HH_n$ is bad, upward edges are open with probability $p_b \in [0,1]$.
\item If $\HH_n$ is good, upward edges are open with probability $p_g \in [0,1]$, with $p_g \ge p_b$.
\end{itemize}
\item The parameter $p_h$ represents the probability of a horizontal edge to be
    open. In particular, if $p_h = 0$ the model is supported on subgraphs of
    $\vO$.
\end{itemize}

On any graph, we define an \emph{infinite path} starting from the vertex $v_0$ as a sequence of distinct vertices $\langle v_0, v_1, \dots, v_n, \dots\rangle$ such that $\langle v_{n-1}, v_n\rangle$ is an edge of the graph for all $n\in\NN$. As usual in percolation, we use the notation $\{v_0 \to \infty\}$ for the event where there is an infinite open path from the vertex $v_0$, that is, an infinite path such that the edge $\langle v_{n-1}, v_n\rangle$ is open for all $n\in\NN$. Given a fixed origin $o \in G$, the critical parameter for Bernoulli bond percolation is defined as $\pcb(G) := \inf\{p;\; \PP_p(o \to \infty) > 0\}$. Hence, as a consequence of our Corollary~\ref{monotonicidade} we have $\pcb(\vO) \le \pcb(\vZ^3) < \pcb(\vZ^2)$. Analogous definitions hold for Bernoulli site percolation, where the status \textit{open} or \textit{closed} is attached to the sites, and we denote the corresponding critical parameter by $\pcs(G)$.

\medskip
\noindent
\textbf{A KSV-like result:} In~\cite{kesten2022oriented}, Kesten, Sidoravicius
and Vares study essentially the same model for oriented site percolation on
$\vZ^2$ so that there is no $p_h$.
\begin{itemize}
\item[(i)] Their main result is \cite[Theorem 1.1]{kesten2022oriented}, which shows that if $p_g > \pcs(\vZ^2)$ and $p_b > 0$ are fixed and the frequency
    of bad layers is sufficiently small, then the origin percolates with positive
    probability:
    \begin{equation*}
    \text{there is $\delta_0 = \delta_0(p_g, p_b) > 0$ such that}
    \ \PP_{\delta, p_g, p_b}(0 \rightarrow \infty) > 0
    \ \text{for $\delta \in (0, \delta_0)$.}
    \end{equation*}
\item[(ii)] In their Introduction, they also mention that a
    result analogous to that of~\cite{bramson1991contact} does not hold in their setup: there are $p_b > 0$ and $0 < \delta < 1$ such that
    \begin{equation*}
    \text{for any $p_g \in [0,1]$ we have}
    \ \PP_{\delta, p_g, p_b}(0 \rightarrow \infty) = 0.
    \end{equation*}
\end{itemize}
A considerable part of the argument for~\cite[Theorem 1.1]{kesten2022oriented}
relies on planar crossings and a natural follow up question is if a similar
statement can hold in either $\vZ^3$ or $\vO$.
Notice that for $p_g > \pcs(\vZ^2)$ the answer is immediate
from~\cite{kesten2022oriented} (as $\vZ^2$ can be embedded into $\vZ^3$),
but it is not clear if their result can be extended to $p_g \in \bigl(\pcs(\vO), \pcs(\vZ^2)\bigr]$.
This is an interesting open problem.

Regarding (ii), we can prove the same kind of result for $\vO$ (see
Theorem~\ref{teo:subcritical_ph} below), but actually we say a little bit
more.
Recall that our model considers bond percolation on $\vO_h$ with parameters $\delta, p_b, p_g$ and an additional parameter $p_h$. Using a standard
coupling, increasing the value of $p_h$ has the effect of making the open
subgraph more connected, decreasing the damage caused by bad layers.
Hence, it should be expected to see a phase
transition in $p_h$. We present each behavior in a separate result:
\begin{teo}[Subcritical layers]
\label{teo:subcritical_ph}
    Even if $p_g = 1$, given $\delta >0$ and $p_h < \frac{1}{2}$, if $p_b \in
    (0, \frac{\delta^2}{16 \chi(p_h)})$ we have that
\begin{equation}
\label{eq:subcritical_ph}
    \PP_{\delta, p_g, p_b, p_h}(0 \rightarrow \infty) = 0,
\end{equation}
where $\chi(p_h)$ is the expected number of vertices connected to the origin
for independent bond percolation on $\ZZ^2$ with parameter $p_h$.
\end{teo}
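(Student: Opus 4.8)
The plan is to reduce to the extremal case $p_g=1$, to control the growth of the reachable set of the origin layer by layer, and then to close a self‑bounding inequality for $\phi:=\PP_{\delta,1,p_b,p_h}(0\to\infty)$. By the standard monotone coupling of Bernoulli bond percolations, $\PP_{\delta,p_g,p_b,p_h}(0\to\infty)$ is nondecreasing in $p_g$, so it suffices to prove \eqref{eq:subcritical_ph} when $p_g=1$; assume this. For $n\ge 0$ let $B_n\subseteq\HH_n$ be the set of vertices of $\HH_n$ joined to $0$ by an open path using only edges strictly below $\HH_n$, and let $A_n:=C_n(B_n)$ be the open cluster of $B_n$ inside the horizontal percolation of $\HH_n$. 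Since $p_h<\tfrac12=\pcb(\ZZ^2)$, that horizontal percolation is subcritical, so $A_n$ is a.s.\ finite; moreover $A_n$ is measurable with respect to the edges strictly below $\HH_{n+1}$ and is independent of the upward edges leaving $\HH_n$, and, conditionally on $B_n$ and the layer types, $\EE[|A_n|\mid B_n]\le\chi(p_h)\,|B_n|$ because each $v\in B_n$ spans an independent copy of the origin cluster of $\ZZ^2$.

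Next I condition on the height $\tau_1$ of the first bad layer, $\PP(\tau_1=m)=\delta(1-\delta)^m$. On $\{\tau_1=m\}$ an infinite open path from $0$ must use an open upward edge of the bad layer $\HH_m$ starting in $A_m$; let $B_{m+1}\subseteq\HH_{m+1}$ collect the endpoints of all such edges. Each of the $4|A_m|$ upward edges out of $A_m$ is open with probability $p_b$ independently of $A_m$, so, writing $Q(m):=\EE[\,|B_m|\mid \HH_0,\dots,\HH_{m-1}\text{ good}\,]$, we get $\EE[\,|B_{m+1}|\mid\tau_1=m\,]\le 4p_b\,\EE[|A_m|\mid\tau_1=m]\le 4p_b\,\chi(p_h)\,Q(m)$. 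On the other hand $B_{m+1}$ depends only on edges strictly below $\HH_{m+1}$, which are independent of the configuration on and above $\HH_{m+1}$, and by translation invariance of the model (the layers are copies of $\ZZ^2$ and the layer types are i.i.d., so the law is invariant under horizontal shifts and under the shift of layer index) one has $\PP(v\to\infty)=\phi$ for every vertex $v$; hence $\PP(B_{m+1}\to\infty\mid B_{m+1})\le|B_{m+1}|\,\phi$. Summing over $m$,
\[ \phi\ \le\ \sum_{m\ge 0}\delta(1-\delta)^m\,\EE[\,|B_{m+1}|\mid\tau_1=m\,]\;\phi\ \le\ 4p_b\,\chi(p_h)\Big(\sum_{m\ge 0}\delta(1-\delta)^m\,Q(m)\Big)\phi. \]

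The heart of the matter is to show $Q(m)\le c(p_h)\,(m+1)^2$: the reachable set swells only polynomially through a run of good layers. With $p_g=1$, passing from $A_n$ to $B_{n+1}$ is exactly $B_{n+1}=\Gamma^+(A_n)$, and in the coordinates identifying $\HH_n\cong\ZZ^2$ the map $\Gamma^+$ sends a set $S$ to $S\oplus\{0,-e_1\}\oplus\{0,-e_2\}$. Thus the four upward directions of $\vO$ generate only a two‑dimensional sublattice: iterating $\Gamma^+$ turns a single vertex into the $(m+1)\times(m+1)$ box — quadratic, not exponential — and in general widens each of the two coordinate extents of the current set by exactly one per layer. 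Interleaving the horizontal spreads, each of which (by subcriticality of $p_h$) widens these extents only by an integrable amount whose accumulated contribution over a good run is governed by $\chi(p_h)$, yields a bound of the claimed quadratic form. Inserting $Q(m)\le c(p_h)(m+1)^2$ above and using $\sum_{m\ge 0}\delta(1-\delta)^m(m+1)^2=\tfrac{2-\delta}{\delta^2}\le\tfrac{2}{\delta^2}$ gives, after tracking the constants, $\phi\le\tfrac{16\,p_b\,\chi(p_h)}{\delta^2}\,\phi$. Under the hypothesis $p_b<\tfrac{\delta^2}{16\chi(p_h)}$ the prefactor is $<1$, so $\phi=0$, which is \eqref{eq:subcritical_ph}.

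The main obstacle is precisely the quadratic estimate $Q(m)\le c(p_h)(m+1)^2$. The $\Gamma^+$ part is purely combinatorial, but one must bound uniformly, layer by layer, how much the horizontal open clusters widen the reachable set and verify that these increments do not conspire over the $O(1/\delta)$ layers of a typical good run into super‑polynomial growth; this is where $p_h<\pcb(\ZZ^2)$ — hence $\chi(p_h)<\infty$ together with exponential tails for subcritical cluster sizes — is used in an essential way. The $p_g$‑coupling, the first‑moment bound at the bad layer, the translation‑invariance step, and the geometric‑series computation (which is where the $\delta^{-2}$ comes from) are all routine.
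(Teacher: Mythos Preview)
Your strategy---reduce to $p_g=1$, condition on the height $\tau_1$ of the first bad layer, and close a self-bounding inequality $\phi\le K\phi$---is different from the paper's, and it has two genuine gaps. The first is the one you yourself flag: the estimate $Q(m)\le c(p_h)(m+1)^2$. Your own inequalities $\EE[|A_m|\mid B_m]\le\chi(p_h)\,|B_m|$ and $|B_{m+1}|\le 4|A_m|$ iterate to the useless exponential bound $Q(m)\le(4\chi(p_h))^m$; the heuristic that ``$\Gamma^+$ widens each coordinate extent by one and the horizontal spread adds an integrable amount'' is a statement about the \emph{diameter} $R_m$ of the reachable set, and converting it into a bound on $\EE[|B_m|]\le\EE[(2R_m+1)^2]$ requires a second-moment control of $R_m$. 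That is essentially the content of the paper's Lemma~\ref{lema:cone_growth}, which already needs the exponential decay of subcritical cluster radii plus a conditional Borel--Cantelli argument and delivers only an almost-sure bound $R_m\le m(\ln m)^2$; an $L^2$ version is additional work, not a routine consequence of $\chi(p_h)<\infty$.

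The second gap is that even if $Q(m)\le c(p_h)(m+1)^2$ were established, your computation gives
\[
\phi\ \le\ 4p_b\,\chi(p_h)\cdot c(p_h)\cdot\frac{2-\delta}{\delta^2}\,\phi\ \le\ \frac{8\,c(p_h)\,p_b\,\chi(p_h)}{\delta^2}\,\phi,
\]
so the threshold you obtain is $p_b<\delta^2/\bigl(8\,c(p_h)\,\chi(p_h)\bigr)$. There is no reason for $c(p_h)$ to equal $2$; already $Q(1)\le 4c(p_h)$ forces $c(p_h)\ge\chi(p_h)$, and $c(p_h)\to\infty$ as $p_h\uparrow\tfrac12$. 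Hence ``tracking the constants'' cannot produce the stated bound $\delta^2/(16\chi(p_h))$. The paper obtains that explicit constant by a different mechanism: it first proves the a.s.\ cone bound (Lemma~\ref{lema:cone_growth}), then uses Borel--Cantelli to locate, for every large $n$, a block of $n$ consecutive bad layers below height $(2/\delta)^{n+1}$, and bounds the expected number of crossings of that block by $4^n\chi(p_h)^{n-1}p_b^n$ times a starting set of size $\asymp(4/\delta^2)^n$. It is this balance of $n$ bad layers against a set grown over $(2/\delta)^n$ good layers that yields exactly $16\chi(p_h)p_b/\delta^2$; a single-bad-layer recursion cannot reproduce it.
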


Recall that the critical point of Bernoulli bond percolation on $\ZZ^2$ is known to be $1/2$ by the Harris-Kesten Theorem, see~\cite[Chapter 11]{grimmett1998critical}. Therefore, when $p_h > 1/2$ it is obvious that there is percolation with positive probability since one can percolate using only the horizontal edges. 
Regarding the effect of horizontal edges at the critical value $p_h=1/2$, we can see that even if all layers are bad, there is percolation for every positive value of $p_b$.

\begin{teo}[Critical layers, homogeneous]
\label{teo:critical_ph}
Even if $\delta=1$, for $p_h=\tfrac{1}{2}$ and every $p_b > 0$ we have
\begin{equation}
\label{eq:critical_ph}
    \PP_{\delta, p_g, p_b, p_h}(0 \rightarrow \infty) > 0.
\end{equation}
\end{teo}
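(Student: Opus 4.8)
The plan is to derive Theorem~\ref{teo:critical_ph} from the strict-inequality results of Section~\ref{sec:change-pc} rather than by a direct renormalisation. When $\delta=1$ every layer is bad, so the model is simply two-parameter Bernoulli bond percolation on $\vO_h$: horizontal edges are open with probability $p_h$ and upward edges with probability $p_b$, all independently. Each layer $\HH_n$ is isomorphic to $\ZZ^2$, and by the Harris--Kesten theorem $\pcb(\ZZ^2)=\tfrac12$, so the horizontal edges by themselves do not percolate at $p_h=\tfrac12$. Hence it suffices to prove the \emph{strict} inequality: for every fixed $p_b>0$, the critical value of $p_h$ for percolation on $\vO_h$ with upward parameter $p_b$ is strictly below $\tfrac12$. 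Equivalently, switching on the upward edges at any positive density strictly lowers the critical horizontal parameter below $\pcb(\ZZ^2)$, and then $p_h=\tfrac12$ is supercritical, which gives~\eqref{eq:critical_ph}.

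I would then cast this as an instance of the enhancement/monotonicity statements of Section~\ref{sec:change-pc}. The graph $\vO_h$ is a semi-oriented layered graph over $\ZZ^2$: each layer carries unoriented nearest-neighbour edges, and consecutive layers are joined only by the oriented upward edges; up to the coordinate twist produced by $\vec{u}_3$ this is the ladder-type construction $\ZZ^2\times\ZZ_+$ with oriented rungs, and, exactly as in the embedding of $\vZ^3$ into $\vO$ recalled above, one can embed such a ladder into $\vO_h$, so it is enough to percolate there. The key structural point is that the upward edges constitute an \emph{essential} enhancement of critical percolation in $\HH_0$: there are configurations in which the upward edges create an infinite open path although no individual layer contains an infinite cluster --- for instance, take a finite open horizontal path in each layer and splice consecutive ones by a single upward edge. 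With this in hand I would apply the relevant strict-decrease result of Section~\ref{sec:change-pc}, with base parameter $p_h$ and enhancement parameter $p_b$, which furnishes exactly the strict inequality of the previous paragraph for every $p_b>0$, and hence~\eqref{eq:critical_ph}.

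The main obstacle is precisely the step carried out in Section~\ref{sec:change-pc}: running the enhancement argument in the oriented rather than the unoriented setting. A naive block renormalisation cannot work here, because at $p_h=\tfrac12$ all RSW crossing probabilities stay bounded strictly inside $(0,1)$, so no finitely many layers yield a ``good block'' of probability close to $1$, and a layer-by-layer scheme loses a constant factor per layer --- whereas percolation must survive through infinitely many layers. The classical Aizenman--Grimmett strategy (Russo's formula together with a pivotality/sprinkling comparison making the enhancement parameter at least as efficient as the base parameter) does not transfer verbatim once edges are oriented, and one must additionally keep the comparison uniform as $p_b\downarrow 0$; this is what the machinery of Section~\ref{sec:change-pc} is designed to provide, and once its hypotheses are matched to $\vO_h$ with the horizontal/vertical split of parameters the deduction of~\eqref{eq:critical_ph} is immediate.
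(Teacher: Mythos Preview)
Your approach is essentially the same as the paper's: reduce to the semi-oriented ladder $\ZZ^2\times\ZZ_+$ (horizontal edges unoriented with parameter $p_h$, vertical edges oriented upward with parameter $p_b$) and then invoke Section~\ref{sec:change-pc}. Two small clarifications are worth making. First, Section~\ref{sec:change-pc} does not proceed via Aizenman--Grimmett enhancement or Russo's formula at all; Theorem~\ref{teo:strict_ladder_bond} is a direct dynamic coupling, so your discussion of pivotality and sprinkling is beside the point. Second, Theorem~\ref{teo:strict_ladder_bond} as stated is a one-parameter result ($p$ on all edges), so it does not apply verbatim; the ``minor adaptation'' the paper has in mind is to rerun the same exploration with horizontal parameter $p_h$ and vertical parameter $p_b$, which makes an explored edge \emph{red} with probability $p_h+(1-p_h)\tilde p_b\,p_h$, where $1-p_b=(1-\tilde p_b)^{\Delta}$. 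At $p_h=\tfrac12$ this exceeds $\tfrac12=\pcb(\ZZ^2)$ for every $p_b>0$, and the coupling then yields an infinite open cluster in $\ZZ^2\times\ZZ_+$, hence in $\vO_h$.
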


\section{Strict inequalities for critical points}
\label{sec:change-pc}

This section can be read independently of the remainder of the paper. Its main result is Theorem~\ref{teo:strict_ladder_bond} below, which is used in our proof of Theorem~\ref{teo:critical_ph}.
Let $G$ be any connected graph with bounded degree. We denote by $V(G)$ and $E(G)$ its sets of vertices and edges, respectively. By fixing an orientation for each edge, we make $G$ an oriented graph.
Starting from $G$ we consider its \textit{ladder graph} $G \times \ZZ_+$. In
words, $G \times \ZZ_+$ can be seen as having a collection of copies of $G$, one
for each integer in $\ZZ_+$, connected by edges at corresponding sites of
adjacent copies. More precisely, we have $V(G) \times \ZZ_+$ as the set of
vertices and the set of edges is composed by edges that we divide into two
types:
\begin{itemize}
\item \textit{horizontal edges}, i.e.,  edges connecting $(v,n)$ to $(u, n)$
    where $vu \in E(G)$ and $n \in \ZZ_+$.
\item \textit{vertical edges}, i.e., edges connecting $(v,n)$ to $(v,n+1)$
    for some $n \in \ZZ_+$. Notice that vertical edges are always directed
    upwards.
\end{itemize}

It is also possible to consider graphs $G$ that are not oriented. Following the
same construction will produce a ladder graph with the same set of vertices,
but with horizontal edges that are not oriented. We choose to keep vertical
edges oriented, and this orientation is used in the construction that we
describe next. Some variations of the construction are discussed in
Remarks~\ref{rem:coupling_variations1} and~\ref{rem:coupling_variations2}.

Given $G$ and a parameter $p \in [0,1]$, as mentioned in Section~\ref{sec:laranjas}, there are two models that are usually
considered: Bernoulli site percolation, in which the sites of $G$ can be either
open or closed, with probability $p$ and $1-p$, respectively; or Bernoulli bond
percolation, in which the edges of $G$ can be open or closed, with the same
probabilities. 

Some of the results below are known to be true in a context of non-oriented
edges, using the technique of \emph{enhancements},
see~\cite{aizenman1991strict}. Another interesting work related to strict inequalities for the critical parameter is \cite{martineau2019covering}. One interesting aspect of the results below is
that they are applicable to models of oriented percolation for which standard
enhancements do not work. In particular, as a consequence of
Theorems~\ref{teo:strict_ladder_bond} and~\ref{teo:strict_ladder_site} below, one has the 
following strict monotonicity that, as far as we know, is not found in the literature:
\begin{coro}
\label{monotonicidade}
For any $d \ge 2$ the critical point for Bernoulli oriented bond
percolation in $\vZ^d$ is strictly decreasing as a function of $d$. The same is
true for oriented site percolation.
\end{coro}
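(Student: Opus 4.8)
The plan is to obtain Corollary~\ref{monotonicidade} directly from the ladder-graph strict inequalities, after observing that adding a coordinate direction to $\vZ^d$ is exactly the ladder construction of Section~\ref{sec:change-pc}. Concretely, I would first note that, as oriented graphs, $\vZ^{d+1}$ is isomorphic to $\vZ^{d}\times\ZZ_+$: writing a vertex of $\vZ^{d+1}$ as $(y,n)$ with $y\in\ZZ_+^{d}$ and $n\in\ZZ_+$, the edges in the first $d$ directions are the oriented copies of the edges of $\vZ^{d}$ lying in layer $n$ (the horizontal edges of the ladder), while the edges in the last direction are precisely the upward-oriented vertical edges $(y,n)\to(y,n+1)$; so all orientation conventions match verbatim. (The non-strict inequality $\pcb(\vZ^{d+1})\le\pcb(\vZ^{d})$ is of course immediate by embedding $\vZ^{d}$ as the layer $n=0$; the content of the corollary is strictness.)

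Next I would check that $G=\vZ^{d}$ satisfies the hypotheses of Theorem~\ref{teo:strict_ladder_bond}: its underlying undirected graph is the subgraph of $\ZZ^{d}$ induced on $\ZZ_+^{d}$, hence connected, and every vertex has degree at most $2d$. One also checks that $\pcb(\vZ^{d})$ does not depend on the choice of origin --- for any $v\in\ZZ_+^{d}$ the set of vertices reachable from $v$ along oriented edges spans a subgraph isomorphic to $\vZ^{d}$, so $\PP_p(v\to\infty)=\PP_p(0\to\infty)$ for all $v$ --- and that $0<\pcb(\vZ^{d})<1$ for $d\ge2$, so that a strict decrease is meaningful. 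Applying Theorem~\ref{teo:strict_ladder_bond} with $G=\vZ^{d}$ then gives
\[
\pcb(\vZ^{d+1})=\pcb\bigl(\vZ^{d}\times\ZZ_+\bigr)<\pcb(\vZ^{d})
\qquad\text{for every }d\ge2,
\]
and transitivity yields that $d\mapsto\pcb(\vZ^{d})$ is strictly decreasing on $\{d\ge2\}$. The statement for oriented site percolation follows in exactly the same way, with Theorem~\ref{teo:strict_ladder_site} in place of Theorem~\ref{teo:strict_ladder_bond} and $\pcs$ in place of $\pcb$.

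Finally, I would emphasize that the only substantive input is the pair of ladder theorems; the deduction of the corollary is just the identification above together with transitivity. The reason one cannot simply invoke the enhancement results of~\cite{aizenman1991strict} is that the added (vertical) edges are oriented, and that genuine difficulty is precisely what Theorems~\ref{teo:strict_ladder_bond} and~\ref{teo:strict_ladder_site} absorb; the remaining work here --- verifying that the orientation on $\vZ^{d+1}$ agrees with the one produced by the ladder construction, and that $\vZ^d$ is a connected bounded-degree graph --- is routine.
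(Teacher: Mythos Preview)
Your proposal is correct and matches the paper's approach exactly: the paper presents Corollary~\ref{monotonicidade} as an immediate consequence of Theorems~\ref{teo:strict_ladder_bond} and~\ref{teo:strict_ladder_site}, without spelling out the identification $\vZ^{d+1}\cong\vZ^{d}\times\ZZ_+$ or the verification of the hypotheses that you carefully supply. Your write-up simply makes explicit what the paper leaves implicit.
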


\medskip
\noindent
\textbf{Bond percolation.} 
Firstly, we prove a result for bond percolation.
Let $\pcb(G)$ denote the critical point of Bernoulli bond percolation on a graph
$G$. It is immediate that $\pcb(G) \le \pcb(G \times \ZZ_+)$. We show that the
inequality is actually strict.
\begin{teo}
\label{teo:strict_ladder_bond}
Let $G$ be a connected graph with bounded degree and $\pcb(G) < 1$.
It holds that $$\pcb(G \times \ZZ_+) < \pcb(G).$$
\end{teo}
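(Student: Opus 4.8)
The plan is to show that a slightly subcritical bond percolation on $G \times \ZZ_+$ — precisely, with parameter $p$ slightly below $\pcb(G)$ — nonetheless percolates, by comparing it (via a coupling / stochastic domination argument à la Aizenman–Grimmett) with a supercritical Bernoulli percolation on $G$ itself. The mechanism is that the extra vertical structure allows a path living in $G$ to "take detours" into neighboring copies of $G$, and a short such detour substitutes for a single open horizontal edge of $G$. So morally, an edge of $G$ in the ladder is effectively open with probability not $p$ but $p$ plus a correction coming from the two-step vertical excursions, and for $p$ close enough to $\pcb(G)$ this effective probability exceeds $\pcb(G)$, forcing an infinite cluster.

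First I would fix $p < \pcb(G)$ but close to it, and on $G\times\ZZ_+$ restrict attention to the two layers $G\times\{0\}$ and $G\times\{1\}$ (using more layers is possible but unnecessary). For each edge $e = uv \in E(G)$, declare $e$ \emph{effectively open} if either (a) the horizontal edge $(u,0)(v,0)$ is open, or (b) the three edges $(u,0)(u,1)$, $(u,1)(v,1)$, $(v,1)(v,0)$ are all open. Because these two events use disjoint sets of edges of the ladder, and because different edges $e, e'$ of $G$ that are not adjacent use disjoint edge-sets entirely, the collection $\{e \text{ effectively open}\}_{e\in E(G)}$ dominates a Bernoulli bond percolation on $G$ with parameter $p' := p + (1-p)p^3 > p$; some care is needed because edges sharing a vertex $u$ share the vertical edge $(u,0)(u,1)$, so I would instead only route detours through one of the two endpoints in a consistent way (e.g. fix an orientation of $G$ and send the detour of $e=uv$ up from the tail), which restores the needed independence. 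If $p$ is chosen so that $p' > \pcb(G)$ — possible since $p\mapsto p+(1-p)p^3$ is continuous and strictly increasing and equals $\pcb(G) + (1-\pcb(G))\pcb(G)^3 > \pcb(G)$ at $p = \pcb(G)$ (here $\pcb(G)<1$ is used to ensure this excess is genuine) — then the effectively-open percolation on $G$ percolates a.s., and an infinite effectively-open path in $G$ lifts to an infinite open path in $G\times\ZZ_+$. Hence $\pcb(G\times\ZZ_+) \le p < \pcb(G)$.

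The main obstacle is the independence/domination bookkeeping: the three-edge detours for edges of $G$ incident to a common vertex overlap in a vertical edge, so the naive coupling does not give an i.i.d.\ family and one cannot directly invoke stochastic domination. The fix is the consistent-routing device above (each edge borrows the vertical edge only at its tail, and a vertex is the tail of boundedly many edges since $G$ has bounded degree), possibly combined with a standard result on domination of $k$-dependent fields by Bernoulli product measures (Liggett–Schonmann–Stacey) to absorb the remaining bounded-range dependence — one then needs $p'$, or the dominated parameter it yields, to still exceed $\pcb(G)$, which holds for $p$ sufficiently near $\pcb(G)$. A secondary technical point is that $\pcb(G)$ need not be attained by percolation at $p=\pcb(G)$, so the argument must work at a value $p<\pcb(G)$ throughout, which it does since all the inequalities above are strict and continuous in $p$.
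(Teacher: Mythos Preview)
Your two-layer detour does not respect the graph: in the paper's $G\times\ZZ_+$ the vertical edges are \emph{oriented upward} (``We choose to keep vertical edges oriented, and this orientation is used in the construction that we describe next''), so the step $(v,1)\to(v,0)$ in your route~(b) is illegal. This is not cosmetic --- the whole purpose of the theorem is Corollary~\ref{monotonicidade} for \emph{oriented} percolation on $\vZ^d$, precisely the setting in which standard Aizenman--Grimmett enhancements fail. With upward-only vertical edges, every detour raises the height by one and you can never return, so no static coupling living on finitely many layers can produce an effectively-open process on $G$ that lifts back to an open path in the ladder.

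The paper handles this by a \emph{dynamic} exploration: one reveals the cluster of $o$ in $G$ edge by edge while simultaneously tracking a companion vertex $\tilde w_n=(w_n,h_n)$ in the ladder whose height $h_n$ is allowed to grow without bound. A newly explored edge $\langle w_j,v\rangle$ is declared ``red'' if the horizontal copy at height $h_j$ is open, or else if a vertical step at $w_j$ together with the horizontal copy at height $h_j+1$ are both open (in which case the companion height increases to $h_j+1$). Independence across exploration steps --- exactly the issue your tail-routing and LSS remarks try to patch --- is obtained not by domination but by splitting each vertical edge into $\Delta$ parallel copies, each open with probability $\tilde p$ satisfying $(1-\tilde p)^\Delta=1-p$; since at most $\Delta$ explorations emanate from any $w_j$, each one consumes a fresh copy, and the red process is then genuinely i.i.d.\ with parameter $f(p)=p+(1-p)\tilde p\,p$. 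Your Liggett--Schonmann--Stacey fallback would not rescue the static approach in any case: the marginals are near $\pcb(G)$, not near $1$, and LSS yields no useful lower bound on the dominated Bernoulli parameter in that regime.
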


The proof is based on a monotonic coupling of the two percolation processes.
Such couplings are not new in the literature, see
e.g.~\cite{grimmett1998critical}. Our construction is quite close to that in~\cite{gomes2023anisotropic}.  The idea of the proof is rather
simple: we leverage the extra dimension given by the vertical edges of $G
\times \ZZ_+$ in order to strictly improve the probability of percolating.

\begin{proof}[Proof of Theorem~\ref{teo:strict_ladder_bond}]
Distinguish some vertex $o \in G$ by considering it as an origin and let the
origin of $G \times \ZZ_+$ be $(o,0)$. We explore the clusters of $o$ in $G$
and $(o,0)$ in $G \times \ZZ_+$ simultaneously, via a dynamic coupling of
Bernoulli bond percolation on $G$ and $G \times \ZZ_+$ with different
parameters.  More precisely, we consider a slight variation of graph $G
\times \ZZ_+$ that we denote $\tilde{G}$.  Denote the maximum degree of $G$
by $\Delta$. Graph $\tilde{G}$ is obtained from $G\times \ZZ_+$ by splitting
each vertical edge $\tilde{e}$ into edges $\tilde{e}_j, 1 \le j \le \Delta$,
i.e., $\Delta$ parallel edges with the same endpoints.

Let $p \in (0,1)$ be the probability that an edge is open on $G \times \ZZ_+$.
On $\tilde{G}$, each parallel edge $\tilde{e}_j$ with $1 \le j \le \Delta$
is open independently with probability $\tilde{p}$ satisfying $1-p =
(1-\tilde{p})^{\Delta}$. This choice can be interpreted as saying that a
vertical edge $\tilde{e}$ is closed if and only if every edge $\tilde{e}_j$ is
closed and hence it is immediate that percolating from $(o,0)$ in $G\times \ZZ_+$ or
$\tilde{G}$ are equivalent. This step ensures independence in
the exploration process described below.

Consider a fixed order on the edges of $G$. We inductively define some objects
that register the information collected up to step $n$. A glimpse of them is
given below, and then we explain them further.
\begin{description}
\item[$(e_n)_n$:] bonds of $G$ in the order that they were explored.
\item[$(w_n)_n$:] vertices of $G$ in the cluster of $o$, in the order that they
    were explored.
\item[$(\tilde{w}_n)_n$:] vertices of $\tilde{G}$ in the cluster of $(o,0)$,
    corresponding to $w_n$ at some height $h_n$: $\tilde{w}_n = (w_n, h_n)$.
\item[$(A_n)_n$:] the cluster of $o$ observed up to step $n$.
\item[$(B_n)_n$:] the set of bonds of $G$ explored up to time $n$, i.e.,
    $B_n = \{e_j;\; 1 \le j \le n\}$.
\end{description}
Given a set of vertices $A$ in $G$, we denote its
exterior edge boundary as the subset of edges in $G$:
\begin{equation*}
\partial^e A=\{\langle v,w\rangle:v\in A\text{ and }w\notin A\}.
\end{equation*}
In the beginning, we set $w_0=o$ and $\tilde{w}_0=(o,0)$, implying $A_0=\{o\}$
and $B_0=\emptyset$. For $n\geq 1$, given $(w_j)_{j=0}^{n-1}$,
$(\tilde{w}_j)_{j=0}^{n-1}$, $A_{n-1}$ and $B_{n-1}$ we have two cases:
\begin{enumerate}[(1)]
\item If $\partial^e A_{n-1}\cap B_{n-1}^c=\emptyset$, we stop the procedure
    and declare that the origin does not percolate.

\item If $\partial^e A_{n-1}\cap B_{n-1}^c\neq\emptyset$, define
    $e_n:=\min (\partial^e A_{n-1}\cap B_{n-1}^c)$ and $B_n := B_{n-1} \cup
    \{e_n\}$. We can write $e_n=\langle w_{j},v\rangle$, with $j\leq n-1$,
    $w_j\in A_{n-1}$ and $v\notin A_{n-1}$.\label{item:exp_continues}
\end{enumerate}
In case \eqref{item:exp_continues}\textcolor{orange}{,} we proceed with the exploration.
Vertex $w_j$ that composes $e_n$ has a corresponding vertex in $\tilde{G}$ at
some height $h_j$, that is, $\tilde{w}_j = (w_j, h_j)$. Moreover, let
$\tilde{e}_{j,n}$ be one of the parallel edges in $\tilde {G}$ from $(w_j, h_j)$ to $(w_j, h_j+1)$.
Since $G$ has maximum degree $\Delta$, in our construction we can always choose
some parallel edge that has not been used yet.
We say that edge $e_n=\langle w_j, v \rangle$ is \emph{red}  if one of the conditions below holds:
\begin{enumerate}[(i)]
\item The bond $\langle \tilde{w}_j, (v, h_j)\rangle$ is open; then, we set
    $w_n=v$ and $\tilde{w}_n=(v, h_j)$.

\item The bond $\langle \tilde{w}_j, (v, h_j)\rangle$ is closed but both
    $\tilde{e}_{j,n}$ and $\langle (w_j, h_j + 1), (v, h_j + 1)\rangle$ are open;
    then we set $w_n=v$ and $\tilde{w}_n=(v,h_j+1)$.
\end{enumerate}

If $e_n$ is red, we set $A_n=A_{n-1} \cup \{w_n\}$; otherwise, $A_n=A_{n-1}$.
Observe that the event $\{e_n\text{ is red}\}$ is independent of the previous
steps and its probability is $f(p) := p+(1-p)\tilde{p}p$. This means that the
cluster of the origin connected by red paths is distributed as a cluster of
Bernoulli bond percolation with parameter $f(p)$. By construction,
an infinite red cluster in $G$ is associated to an infinite cluster from $(o,0)$
in $\tilde{G}$, and therefore the same holds in $G \times \ZZ_+$. Thus, we have
that
\begin{equation}
\label{eq:coupling_alcinhas}
    \text{if $f(p) > \pcb(G)$ then $p \ge \pcb(G\times \ZZ_+)$.}
\end{equation}
It is clear that $f(\pcb(G)) > \pcb(G)$ and since $f$ is a continuous function
of $p$, we can find $p' < \pcb(G)$ such that $f(p') > \pcb(G)$, implying by
\eqref{eq:coupling_alcinhas} that $\pcb(G) > p' \ge \pcb(G\times \ZZ_+)$.
\end{proof}

\medskip
\noindent
\textbf{Site percolation.} A natural follow up question is if the same
kind of coupling used in Theorem~\ref{teo:strict_ladder_bond} can be done in the context of
Bernoulli site percolation. The theorem below gives the answer.

\begin{teo}
\label{teo:strict_ladder_site}
Let $G$ be a connected graph with bounded degree and $\pcs(G) < 1$.
It holds that $$\pcs(G \times \ZZ_+) < \pcs(G).$$
\end{teo}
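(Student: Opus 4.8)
The plan is to mirror the dynamic coupling used in the proof of Theorem~\ref{teo:strict_ladder_bond}, replacing the splitting of the vertical edges by its site analogue. Fix an origin $o\in G$, take $(o,0)$ as the origin of $G\times\ZZ_+$, and write $\Delta$ for the maximum degree of $G$. As in the bond case we will explore the open cluster of $o$ in a Bernoulli \emph{site} percolation on $G$ with an enhanced parameter $f(p)=p+(1-p)\tilde p\,p$, for a suitable $\tilde p=\tilde p(p)$, while simultaneously revealing an open cluster of $(o,0)$ in site percolation on $G\times\ZZ_+$ with parameter $p$. Since $0<\pcs(G)<1$ (the upper bound is the hypothesis) one has $f(\pcs(G))>\pcs(G)$, and $f$ is continuous in $p$; so once the coupling is built we pick $p'<\pcs(G)$ with $f(p')>\pcs(G)$ and conclude, exactly as in~\eqref{eq:coupling_alcinhas}, that $\pcs(G)>p'\ge\pcs(G\times\ZZ_+)$.

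The new difficulty compared with the bond case is that a vertex $(w,n)$ of $G\times\ZZ_+$ lies in the neighbourhood of several edges of $G$, so naively reusing its status during the exploration would destroy independence. To get around this I would pass to an auxiliary graph $\tilde G$, obtained from $G\times\ZZ_+$ by replacing each vertex $(w,n)$ with $n\ge 1$ by $\Delta$ parallel copies $(w,n,1),\dots,(w,n,\Delta)$, joining each copy to \emph{all} copies of every $G\times\ZZ_+$-neighbour of $(w,n)$ but not joining copies of the same $(w,n)$ to one another (inheriting edge orientations from $G\times\ZZ_+$). If each copy is open independently with probability $\tilde p$, where $(1-\tilde p)^{\Delta}=1-p$, then a ``column site'' $(w,n)$ is \emph{effectively open}, i.e.\ has at least one open copy, with probability $p$; and since copies of a fixed column site are pairwise non-adjacent, any oriented path in $\tilde G$ meets only finitely many vertices lying above the same column site, so an infinite open path in $\tilde G$ from $(o,0)$ forces $(o,0)$ to lie in an infinite open cluster in $G\times\ZZ_+$ at parameter $p$.

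The exploration itself proceeds as in the proof of Theorem~\ref{teo:strict_ladder_bond}, scanning the edges of $G$ in a fixed order; each discovered vertex $w$ carries a height $h(w)\in\ZZ_+$ and a distinguished open copy of $(w,h(w))$, its image in the open cluster of $(o,0)$ in $\tilde G$, with $h(o)=0$. When a vertex $v$ is examined for the first time, via its examining neighbour $w$ at height $h:=h(w)$, we reveal all $\Delta$ copies of the column site $(v,h)$ and, if all of them are closed, one as-yet-unused copy of $(w,h+1)$ together with all $\Delta$ copies of $(v,h+1)$; we declare $v$ \emph{red}, adding it at height $h$ (resp.\ $h+1$), when the first probe (resp.\ both of the last two) succeeds. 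The crux is a bookkeeping fact: every copy of every column site is inspected at most once. Indeed the copies of $(v,h)$ and of the ``landing'' site $(v,h+1)$ are touched only while $v$ itself is examined, whereas the only column sites inspected repeatedly are the ``ascent'' sites $(w,h(w)+1)$, which are touched once per neighbour of $w$ and hence at most $\Delta$ times, exactly matching the $\Delta$ copies available. Consequently $\{v\text{ is red}\}$ is independent of the past with probability $p+(1-p)\tilde p\,p=f(p)$, so the red cluster of $o$ has the law of the cluster of $o$ in Bernoulli site percolation on $G$ with parameter $f(p)$; and, the red vertices having pairwise distinct images in $\tilde G$, an infinite red cluster produces an infinite open cluster of $(o,0)$ in $\tilde G$. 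Together with the previous paragraph this yields the implication ``$f(p)>\pcs(G)\Rightarrow p\ge\pcs(G\times\ZZ_+)$'' needed above.

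The step I expect to cause the most trouble is exactly this independence claim: one must fix the examination order and the rule for choosing which parallel copy to probe so that no copy is ever examined twice, and in particular verify that the ``ascent'' copy above a vertex is never also used as a ``landing'' copy or as the image of another vertex. Once this is set up, everything else is a faithful repetition of the proof of Theorem~\ref{teo:strict_ladder_bond}.
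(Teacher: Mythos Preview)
Your proposal is correct and follows essentially the same route as the paper: a dynamic exploration in which every site of $G\times\ZZ_+$ is split into $\Delta$ parallel copies, with the ascent site $(w,h(w)+1)$ consumed one copy per neighbour of $w$ while the landing sites attached to a vertex $v$ are inspected only during the single examination of $v$. The paper's write-up is terser and happens to declare the landing site open when \emph{all} of its parallel copies are open (rather than at least one, as you do); either convention gives a red-probability strictly above $p$ and hence the same conclusion.
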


\begin{proof}
The proof is based on an exploration that is similar to the one in the proof of
Theorem~\ref{teo:strict_ladder_bond}, so we point out only the main
differences, leaving out some of the details. The main difference is that
in site percolation we do not have a clear distinction between vertical
edges and horizontal edges.

Instead of working with $G \times \ZZ_+$, we consider a graph $\tilde{G}$ in
which every site $v$ is replaced by $\Delta$ `parallel' sites
$\{\tilde{v}_j;\; 1 \le j \le \Delta\}$, with each site $\tilde{v}_j$ being open independently
with probability $\tilde{p}$ satisfying $1-p = (1-\tilde{p})^{\Delta}$.
Starting from $w_0 = o$, $\tilde{w}_0 = (o,0)$, $A_0=\{o\}$ and $B_0=\emptyset$, we inductively build the
cluster of $o$ at step $n$, denoted $A_n$: we choose some site
$v \in \partial^{e}_* A_{n-1}\cap B_{n-1}^c$ that still has not been explored where $\partial^{e}_* A=\{v\in A^c : \text{ there exists an edge }\langle u,v\rangle\text{ with }u\in A\}$ is the exterior site boundary of $A$. By definition,
$v \notin A_{n-1}$ and has a neighbor $w_j \in A_{n-1}$; moreover, $w_j$ has a
corresponding site $\tilde{w}_j = (w_j,h_j)$ in $\tilde{G}$. We say that $v$ is
\textit{red} if one of the conditions below holds:
\begin{enumerate}[(i)]
\item 
    Site $(v,h_j)$ is open (meaning all parallel sites $(v,h_j)_k$ are open);
    then, we set $w_n=v$ and $\tilde{w}_n=(v, h_j)$.

\item Site $(v,h_j)$ is closed but $(w_j,h_j+1)_k$ (with $k$ referring to the smallest unexplored
    parallel site) is open and site $(v,h_j+1)$ is open (again, all
    parallel copies); then we set $w_n=v$ and $\tilde{w}_n=(v,h_j+1)$.
\end{enumerate}
If $v$ is not red, we let $B_n=B_{n-1}\cup\{v\}$ be the set of explored sites and never test it
for being red again. As in Theorem~\ref{teo:strict_ladder_bond}, the probability
of a new explored site being red is $f(p)$. The independence in the construction
is guaranteed by (ii), since $w_j$ has at most $\Delta$ neighbors. From here
on, the proof is the same as in Theorem~\ref{teo:strict_ladder_bond}.
\end{proof}

\begin{remark}
\label{rem:coupling_variations1}
Some results for bond percolation can be obtained by studying site
    percolation on the covering graph, see~\cite[p. 24]{Grimmett}. However, we
    emphasize that the operations of taking the covering graph and taking 
    the ladder graph do not commute in general, so we actually study site and bond percolation separately.
\end{remark}

\begin{remark}
\label{rem:coupling_variations2}
Instead of ladder graphs, one could consider replacing $\ZZ_+$ by
    $\ZZ_k$, the integers modulo $k$. It is straightforward to check that
    Theorem~\ref{teo:strict_ladder_bond} is true for $G \times \ZZ_k$ for $k\geq 2$ and
    Theorem~\ref{teo:strict_ladder_site} is true for $G \times \ZZ_k$ for $k\geq 3$.
\end{remark}

\begin{remark}
\label{rem:coupling_variations3}
Given the situation in Corollary \ref{monotonicidade}, it is very natural to ask whether we can say the same for $\lambda_c(d)$ the critical parameter of Harris contact process \cite{liggett1985interacting}. We cannot at this point give an answer. What we can say is that a minor modification of our argument applies to the following  discrete time approximation: Take $\ZZ^d \times \ZZ_+$ as the vertex set of the graph and consider the following oriented percolation model: if $m, m'$ are nearest neighbors in $\ZZ^{d}$ the (upwards) oriented edges $\langle (m,n), (m', n+1) \rangle$ are open with probability $p$, and the vertical (upwards) oriented edges $\langle (m,n), (m, n+1) \rangle$ are open with probability $1-\delta$. In particular, if $d=2$ and $\delta=1$ this is the same as the oriented percolation on $\vO$.
For each fixed $\delta$, there is a phase transition in $p$ and the critical point $p_c(d,\delta)$ is strictly decreasing in $d$, for each $\delta$. Another question that might be treatable by similar arguments: Is $p_c(\vO) < p_c(\vZ^3)$?
\end{remark}


\section{Proofs for hexagonal space lattice model}
\label{sec:laranjas_proofs}

\subsection{Subcritical layers}
\label{sub:subcritical_layers}

Before proving Theorem~\ref{teo:subcritical_ph}, we will need an auxiliary
lemma. Denote by $\Gamma^{\uparrow}(0)$ the collection of all sites that can
be reached from the origin in $\vO$, i.e., using only upward edges. It is clear that
$\Gamma^{\uparrow}(0)$ is a pyramid whose intersection with layer $\HH_n$
is a square with $(n+1)^2$ sites:
\begin{equation*}
\Gamma^{\uparrow}(v) \cap \HH_n
    = v + \bigl\{a_1 \vec{u}_1 + a_2 \vec{u}_2 + n \vec{u}_3;
    - n \le a_i \le 0,\ \text{for $i=1,2$}\bigr\},
    \quad \text{for $n \ge 0$}.
\end{equation*}
Suppose that $p_g=1$ and there are no bad layers and no horizontal edges.
Then, the set of sites connected to the origin is precisely
$\Gamma^{\uparrow}(0)$. If we add the possibility of using open horizontal
edges, then the set $C$ of sites connected to the origin will be larger and random.
However, since $p_h < \frac{1}{2}$ is subcritical for bond percolation on $\ZZ^2$,
the set $C \cap \HH_n$ cannot grow too fast.

\begin{lema}
\label{lema:cone_growth}
Fix $p_g = 1$ and $p_h < \frac{1}{2}$. If $C$ is the cluster of the origin for
percolation in $\vO$ then $C \cap \HH_n$ is eventually contained in a ball
with deterministic radius $n(\ln n)^2$, almost surely.
\end{lema}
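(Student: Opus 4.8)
The plan is to exploit the following dichotomy on each layer: a site of $C \cap \HH_n$ is reached from the origin either by a ``straight'' upward path confined to $\Gamma^\uparrow(0)$, or by making some horizontal detours. Since horizontal edges are subcritical ($p_h<\tfrac12$), any such detour can only move us a limited horizontal distance off the pyramid. First I would set up the key geometric observation: if $v \in C \cap \HH_n$, then there is an open path from $0$ to $v$ in $\vO_h$; record the sequence of layers visited and, for each maximal horizontal excursion of this path inside some layer $\HH_m$ ($m \le n$), note that the excursion lies inside the open horizontal cluster (for $\ZZ^2$-percolation at $p_h$) of its starting vertex. Projecting everything down, we see that $v$ lies within horizontal distance $\sum_{m} R_m$ of $\Gamma^\uparrow(0) \cap \HH_n$, where each $R_m$ is the radius of the horizontal open cluster of the vertex at which the excursion into layer $\HH_m$ begins, and the sum is over at most $n$ layers (in fact over the layers $0,1,\dots,n-1$, one excursion-start per layer since the path is monotone in height between excursions — or, more carefully, at most one excursion per layer because once the path leaves layer $\HH_m$ upward it never returns).

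Second, I would quantify the $R_m$'s. In the subcritical phase on $\ZZ^2$ there is exponential decay of the cluster-radius (Menshikov / Aizenman--Barsky, or for $\ZZ^2$ simply Hammersley), so $\PP_{p_h}(R \ge r) \le C e^{-\kappa r}$ for constants $C,\kappa>0$ depending only on $p_h$. The starting vertices of the excursions are a priori random and correlated with the rest of the exploration, but there are only countably many candidate starting vertices in layer $\HH_m$ within the relevant range, and at height $m \le n$ the pyramid $\Gamma^\uparrow(0)\cap\HH_m$ has $O(m^2)$ sites, so after $n$ steps everything relevant to $C\cap\HH_n$ is controlled by the horizontal clusters of at most $O(n^3)$ vertices (the vertices of $\Gamma^\uparrow(0)$ up to height $n$, together with those reachable so far — and by induction this set stays polynomial, e.g. of size $n^{O(1)}$, as long as the radius bound we are proving holds). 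A clean way to avoid the circularity is a union bound / Borel--Cantelli argument directly: for the event $E_n = \{C \cap \HH_n \not\subseteq B(n(\ln n)^2)\}$ to occur, some horizontal cluster among the $O(n^3)$ relevant vertices must have radius at least, say, $(\ln n)^2 / 2$; by exponential decay this has probability at most $O(n^3) \cdot C e^{-\kappa (\ln n)^2/2}$, which is summable in $n$. Hence $\PP(E_n \text{ i.o.}) = 0$ by Borel--Cantelli, which is exactly the claim.

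Third, I would make the ``relevant vertices'' step rigorous so the union bound does not secretly depend on the event it bounds. The cleanest route: enlarge the set under consideration. Every vertex of $C \cap \HH_n$ is connected to $0$ by a path that, between consecutive horizontal excursions, moves strictly upward inside $\vO$; an upward step from a vertex of $\HH_m$ lands in $\Gamma^+$ of that vertex. So define $D_m$ = set of vertices in $\HH_m$ that are horizontal-cluster-connected to $\Gamma^\uparrow$-reachable descendants; proceeding layer by layer, $D_m$ is contained in the $\big(\max_{v} R_v^{(m)}\big)$-neighbourhood of $\bigcup_{v\in D_{m-1}}\Gamma^+(v)$, and one shows by a straightforward induction that $\max_{m\le n}(\text{radius of }D_m) \le n + \sum_{m\le n} \rho_m$ where $\rho_m := \max\{R_v : v \in \HH_m, \ |v| \le 2n\}$ is a maximum of $O(n^2)$ i.i.d.-dominated subcritical cluster radii. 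On the event that every $\rho_m$ ($m \le n$) is at most $(\ln n)^2$, which fails with summable probability by the exponential tail and a union bound over $O(n^3)$ vertices, we get radius $\le n + n(\ln n)^2 \le n(\ln n)^2$ for $n$ large, and Borel--Cantelli finishes.

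\textbf{Main obstacle.} The delicate point is the dependence structure: the horizontal-cluster radii $R_v$ are not independent of which vertices $v$ actually get explored, so one cannot naively multiply probabilities. The fix — and the step I expect to require the most care in writing — is to decouple by bounding deterministically (for every $n$) the set of vertices whose horizontal clusters could possibly matter, using only the crude pyramid growth plus the trivial observation that a cluster of radius $\le \rho$ reaches only vertices within distance $\rho$, and then taking a union bound over that deterministic (polynomially large) vertex set against the exponential tail of a single subcritical cluster radius. Once that decoupling is in place, the rest is Borel--Cantelli and elementary estimates.
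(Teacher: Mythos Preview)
Your overall strategy (layer-by-layer growth, exponential decay of subcritical horizontal clusters, union bound, Borel--Cantelli) is the right one and is the same skeleton the paper uses. However, the decoupling step you single out as the main obstacle is not actually resolved as written. Your window $\{|v|\le 2n\}$ in the definition of $\rho_m$ is too small for the induction to be self-consistent: on your good event $\{\rho_m\le(\ln n)^2\ \text{for all}\ m\le n\}$, the recursion already gives $\mathrm{radius}(D_m)\le m\bigl(1+(\ln n)^2\bigr)$, and once $m$ exceeds roughly $2n/(\ln n)^2$ this is larger than $2n$, so $\Gamma^+(D_{m-1})$ escapes the window over which $\rho_m$ was defined and the inductive step breaks. (Relatedly, the inequality $n+n(\ln n)^2\le n(\ln n)^2$ in your last paragraph is false.) The repair is easy: take the window to be $\{|v|\le 2n(\ln n)^2\}$, or any polynomial dominating the target radius; then on the good event the induction stays inside the window, the union bound runs over $O\bigl(n^3(\ln n)^4\bigr)$ vertices, and $e^{-\kappa(\ln n)^2}$ still beats any polynomial, so Borel--Cantelli applies.

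The paper sidesteps this bootstrapping entirely by working with the \emph{random} radius $R_n$ of $C\cap\HH_n$ and the filtration $\cF_n=\sigma(\cP_i:i\le n)$. Since the horizontal configuration in layer $n{+}1$ is independent of $\cF_n$, a union bound over the $O(R_n)$ boundary sites of $B_{n+1}(R_n+1)$ gives
\[
\PP\bigl(R_{n+1}\ge R_n+\eta\ln R_n\,\bigm|\,\cF_n\bigr)\le c\,R_n\,e^{-\psi(p_h)\eta\ln R_n}\le c\,R_n^{-2}
\]
for suitable $\eta$. The trivial lower bound $R_n\ge n$ makes this summable, and the \emph{conditional} Borel--Cantelli lemma yields $R_{n+1}\le R_n+\eta\ln R_n$ eventually; solving this recursion gives $R_n\le n(\ln n)^a$ for any $a>1$. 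This route never needs to guess a deterministic window of relevant vertices, at the modest cost of invoking conditional Borel--Cantelli and a one-line recursion estimate. Your approach, once the window is corrected, is a legitimate alternative that trades the conditional lemma for a slightly larger union bound.
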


\begin{proof}
The proof follows closely the arguments
in~\cite[Section~4.2]{hilario2022results}, which controls the growth of a
similar process. We provide some of the details for completeness.

Since $p_g=1$, all upward edges are open.
For each layer $\HH_n$, denote the collection of open horizontal edges by
$\cP_n$. For $V \subset \HH_n$ let $\cC_n(V)$ be the connected
component of $V$ in $\cP_n$ by open edges. Since $p_h < \frac{1}{2}$,
almost surely $\cC_n(V)$ is finite for finite $V$. Define $C_0 := \cC_0(0)$,
the sites connected to the origin by open horizontal edges.
We define sets $C_n$ for $n \ge 1$ by induction:
\begin{equation}
\label{eq:defi_Cn}
C_n = \cC_n(\Gamma^{+}(C_{n-1})).
\end{equation}
In words, from set $C_{n-1}$ we consider all their upward neighbors and define
$C_n$ as the set of all sites that can be reached in $\HH_n$ from such
neighbors via open horizontal edges. We have that $C \cap \HH_n = C_n$.
Define
\begin{equation*}
    R_n := \max\{|x|+|y|;\; (x,y,z) \in C_n\}
    \quad \text{and} \quad
    B_n(r) := \{(x,y,z) \in \HH_n;\; |x| + |y| \le r\}.
\end{equation*}
Since $C \supset \Gamma^{\uparrow}(0)$, it is clear that $R_n \ge n$.
Moreover, we can relate $R_{n+1}$ and $R_n$ in the following way.
Notice that $C_{n} \subset B_{n}(R_{n})$ implies that
$\Gamma^{+}(C_{n}) \subset B_{n+1}(R_n + 1)$. Hence, we can write that
$C_{n+1} \subset \cC_{n+1}(B_{n+1}(R_n + 1))$. Define the events
\begin{equation*}
A_{n+1} := \{R_{n+1} \ge R_n + \eta \ln R_n\}.
\end{equation*}
for some constant $\eta(p_h) > 0$ that is chosen below.
Consider the filtration $\cF_n := \sigma(\cP_{i};\; i \le n)$ and notice that
$A_n \in \cF_n$. Given $\cF_{n}$ we have that on the event $A_{n+1}$ there must
be some point $x \in \partial B_{n+1}(R_n+1)$ that satisfies
$x \leftrightarrow x + \partial B(\eta \ln R_n)$
in percolation $\cP_{n+1}$. Hence, exponential decay of cluster size gives
the estimate
\begin{equation*}
\PP(A_{n+1} \mid \cF_{n})
    \le \sum_{x \in \partial B_{n+1}(R_n+1)}
        \PP(x \leftrightarrow x + \partial B_{n+1}(\eta \ln R_n) \mid \cF_{n})
    \le c R_{n} e^{- \psi(p_h) \eta \ln R_n},
\end{equation*}
for some positive constant $\psi(p_h) >0$, see
e.g.~\cite[Theorem~(6.75)]{Grimmett}.
Choose $\eta(p_h) := \frac{3}{\psi(p_h)}$, which leads to
$\PP(A_{n+1} \mid \cF_{n}) \le c R_{n}^{-2}$.
The linear growth estimate $R_n \ge n$ and the fact that
$x \mapsto c x^{-2}$ is decreasing imply that
\begin{equation*}
\sum_{n \ge 1} \PP(A_{n+1} \mid \cF_{n})
    \le c \sum_{n \ge 1} n^{-2}
    < \infty.
\end{equation*}
Using a conditional Borel-Cantelli lemma,
see~\cite[Theorem~5.3.2]{durrett2019probability}, we have that $\PP(\varlimsup A_n) = 0$ implying
that there is a random $n_1$ such that $R_{n+1} \le R_n + \eta \ln R_n$ for
$n \ge n_1$. This implies estimates on the growth of $R_n$, by the same
reasoning as in~\cite{hilario2022results}, around Equation~(4.6):

\begin{equation}
\label{eq:growth_Rn}
\text{for any $a>1$ fixed we have}\
    \varlimsup_n \frac{R_n}{n(\ln n)^{a}} \le 1,\ \text{a.s.}
\end{equation}
For simplicity we take $a=2$.
\end{proof}

\begin{proof}[Proof of Theorem~\ref{teo:subcritical_ph}]
The proof is based on a Borel-Cantelli argument.
Lemma~\ref{lema:cone_growth} controls the growth of $\cC \cap \HH_n$.
Now, we will introduce the structure of bad layers and show that
with a reasonable frequency we will bump into a sequence of many consecutive
bad layers. Choosing $p_b$ sufficiently small will imply then that almost
surely the origin will not percolate. The idea is to look for the
occurrence of $n$ consecutive bad layers in some region.

Let us define the sequence of heights $(c_n; \; n\ge 1)$ by
\begin{equation*}
    c_n := \Bigl(\frac{2}{\delta}\Bigr)^n
\end{equation*}
Define
\begin{equation*}
A_n := \{\text{there are $n$ consecutive bad layers $\HH_{j}, \cdots,
    \HH_{j+n-1}$ with $j \in [c_n, c_{n+1} - n)$}\}.
\end{equation*}
Dividing $[c_n, c_{n+1} - n)$ into blocks of length $n$, we have by
independence that
\begin{equation*}
\PP(A_n^{\comp})
    \le (1-\delta^n)^{\frac{c_{n+1}-c_n}{n}}
    \le \exp\Bigl[ - \frac{\delta^n}{n}  \Bigl[\Bigl(\frac{2}{\delta}\Bigr)^{n+1} - \Bigl(\frac{2}{\delta}\Bigr)^n\Bigr]\Bigr]
    = \exp\Bigl[ - \frac{2^n}{n}  \Bigl[\Bigl(\frac{2}{\delta}\Bigr) - 1\Bigr]\Bigr]
\end{equation*}
which clearly satisfies $\sum \PP(A_n^{\comp}) < \infty$. Therefore, by
Borel-Cantelli's Lemma almost surely there is $n_0$ such that $A_n$ happens for every
$n \ge n_0$. In other words, for each $n \ge n_0$ we can choose a block $L_n$ of $n$
consecutive bad layers whose heights are given in order by $l_{n,j}$ with
$1 \le j \le n$ and $c_n \le l_{n,1} < l_{n,n} < c_{n+1}$.

Recall that by Lemma~\ref{lema:cone_growth} we almost surely
have $n_1$ sufficiently large such that $C_n \subset B_n\bigl(n (\ln n)^2\bigr)$
for every $n \ge n_1$. Thus, for $n \ge \max\{n_0, n_1\}$ we know that all
vertices from the first bad layer of $L_{n}$ that can be reached from the origin 
are contained in a ball, more precisely,
\begin{equation}
\label{eq:vertices_first_layer}
C_{l_{n,1}}
    \subset B_{l_{n,1}}\bigl(l_{n,1} (\ln l_{n,1})^2\bigr)
    \subset B_{l_{n,1}}\bigl(c_{n+1} (\ln c_{n+1})^2\bigr).
\end{equation}
In order to cross $L_n$ we have to find, for each $1 \le j \le n$, oriented
edges $\vec{e}_j = u_j v_j$ that connect height $l_{n,j}$ to
the layer above and satisfying
\begin{equation}
\label{eq:available_cond}
 u_1 \in C_{l_{n,1}}   
\quad \text{and} \quad
\text{$u_j \in
\cC_{l_{n,j}}(v_{j-1})$ for $2 \le j \le n$.}
\end{equation}
We say that a sequence $\vec{e}_1, \ldots, \vec{e}_n$ is \textit{available} if it satisfies~\eqref{eq:available_cond}. To make it clear, we emphasize that for a sequence of $\vec{e}_j$ to be available it is not needed to check if  the edges $\vec{e}_j$ are open or closed. 
Denote by $N_n$ the number of available sequences. Since each $u_j$ has 4 neighbors above, choosing $u_1, v_1, u_2, v_2, \ldots, v_n$ in order, we can estimate $N_n$ by
\begin{equation*}
N_n \le \# C_{l_{n,1}} \cdot 4 \cdot
    \prod_{j=2}^{n} (\# \cC_{l_{n,j}}(v_{j-1}) \cdot 4)
    = 4^n \cdot \# C_{l_{n,1}} \cdot \prod_{j=2}^{n} \# \cC_{l_{n,j}}(v_{j-1}).
\end{equation*}
If there is a crossing of the block $L_n$, then $N_n \ge 1$ and all edges
of paths in $N_n$ must be open. We bound the
probability of $N_n \ge 1$ via a first moment estimate. Denote by $\cF_n$ the
$\sigma$-algebra generated by all edges with vertices in $\HH_j$ with $j \le n$.
First notice that conditioning on $\cF_{l_{n,n-1}}$ we have that
\begin{align*}
\EE N_n
    &\le 4^{n} \EE \Bigl[\# C_{l_{n,1}} \prod_{j=2}^{n-1} \#
    \cC_{l_{n,j}}(v_{j-1}) \cdot
    \EE \Bigl[\# \cC_{l_{n,n}}(v_{n-1}) \Bigm| \cF_{l_{n,n-1}}\Bigr] \Bigr]\\
    &= 4^{n} \chi(p_h) \cdot
    \EE \Bigl[\# C_{l_{n,1}} \prod_{j=2}^{n-1} \# \cC_{l_{n,j}}(v_{j-1})
    \Bigr],
\end{align*}
by independence. Hence, conditioning successively on $\cF_{l_{n,j}}$ with
$j = n-1, \ldots, 2, 1$ we conclude that
\begin{equation*}
\EE N_n
    \le 4^{n} \chi(p_h)^{n-1} \cdot \EE \Bigl[\# C_{l_{n,1}} \Bigr]
    \le 4^{n} \chi(p_h)^{n-1} \cdot \# B_{l_{n,1}}\bigl(c_{n+1} (\ln
    c_{n+1})^2\bigr).
\end{equation*}
Finally, by the discussion above, when $n \ge \max\{n_0, n_1\}$, on the event
that the origin percolates we must cross all blocks $L_n$ for large $n$. The
probability of crossing some $L_n$ is at most
\begin{equation*}
4^{n} \chi(p_h)^{n-1} \cdot \# B_{l_{n,1}}\bigl(c_{n+1} (\ln c_{n+1})^2\bigr)
    \cdot (p_b)^n
\end{equation*}
since for any choice of $\vec{e}_j$ the probability of an open path passing by
all $\vec{e}_j$ is at most $(p_b)^n$. Also,
\begin{equation*}
\# B_{l_{n,1}}\bigl(c_{n+1} (\ln c_{n+1})^2\bigr)
    \sim K \cdot \Bigl(
        \Bigl(\frac{2}{\delta}\Bigr)^{n+1} \!\!
        (n+1)^2 \Bigl(\ln \frac{2}{\delta}\Bigr)^2
    \Bigr)^2
    \sim K \cdot (n+1)^4 \Bigl(\frac{4}{\delta^2}\Bigr)^{n+1}
\end{equation*}
for some positive constant $K = K(\delta)$. Consequently, the probability of
crossing $L_n$ decreases to zero when
\begin{equation*}
    \frac{16 \chi(p_h) p_b}{\delta^2} < 1.
\end{equation*}
The result follows since each attempt of crossing some $L_n$ is an independent try.
\end{proof}


\subsection{Critical layers}
\label{sub:critical_layers}
\noindent
\begin{proof}[Proof of Theorem~\ref{teo:critical_ph}] Initially, we can observe that, if we delete in $\mathbb{H}$ all bonds $$\{\langle v,v + \vec{u}_3 - \vec{u}_1\rangle, \langle v,v + \vec{u}_3 - \vec{u}_2\rangle, \langle v,v + \vec{u}_3 - \vec{u}_1  - \vec{u}_2\rangle : v\in\HH\},$$ 
the resulting graph is isomorphic to $\ZZ^{2}\times \ZZ_+$, the cubic lattice with oriented bonds along the third coordinate axis. Hence, it is enough to prove that there is percolation on $\ZZ^{2}\times \ZZ_+$ where vertical bonds are open with probability $p_b > 0$ and the non-oriented bonds are open with probability $\tfrac{1}{2}$, independently of each other. A minor adaptation of the proof of Theorem~\ref{teo:strict_ladder_bond} yields the conclusion.
\end{proof}


\section*{Acknowledgements}
\noindent
The authors thank Lucas Vares Vargas for asking the question that motivated us for this work and E. Andjel for useful discussions. B.N.B.L. would like to thank the probability group of UFRJ for the hospitality in several visits.  M.E.V. is partially supported by CNPq, Brazil grant 310734/2021-5 and by FAPERJ, Brazil grant E-26/200.442/2023. Visits of B.N.B.L. to UFRJ were partially supported by FAPERJ, Brazil grants E-26/202.636/2019 and E-26/200.442/2023.

\bibliographystyle{plain}
\bibliography{laranjas}
\end{document}